\newtheorem{proposition}{Proposition}[section]
\newtheorem{corollary}{Corollary}[section]
\theoremstyle{definition}
\newtheorem{remark}{Remark}[section]
\newcommand{\set}[1]{\mathsf{#1}}
\newcommand{\ud}{\, \mathrm{d} }
\renewcommand{\i}{\mathrm{i}}
\newcommand{\realset}{\mathbb{R}}
\newcommand{\C}{\mathcal{C}}
\newcommand{\vect}[1]{\boldsymbol{#1}}
\newcommand{\vx}{\vect{x}}
\newcommand{\manifold}[1]{\mathscr{#1}} 
\newcommand{\PP}{\manifold{P}}
\renewcommand{\P}{\PP}
\newcommand{\Xcal}{\mathfrak{X}}
\newcommand{\Diff}{\mathrm{Diff}}
\newcommand{\LieD}[1]{\mathcal{L}_{\! #1}}
\newcommand{\flow}{\varphi}
\newcommand{\dflow}{\Phi}
\providecommand{\spg}{\mathfrak{sp}}
\providecommand{\GL}{\mathrm{GL}}
\providecommand{\gl}{\mathfrak{gl}}
\providecommand{\Galg}{\mathfrak{g}}
\providecommand{\Ggroup}{\set{G}}
\newcommand{\onSKF}[2]{#2}
\title{Geometric Integration of Non-autonomous Systems with Application to Rotor Dynamics}
\author[1,2]{Klas MODIN}
	\affil{
	$^a$SKF Engineering \& Research Centre \\
	MDC, RKs--2, SE--415 50 Göteborg, Sweden
	}
	\affil{
	$^b$Centre for Mathematical Sciences, Lund University \\
	Box 118, SE--221 00 Lund, Sweden \\
	E--mail: {\selectfont\ttfamily\itshape kmodin@maths.lth.se}
	}
	\affil[1]{
	Centre for Mathematical Sciences, Lund University \\
	Box 118, SE--221 00 Lund, Sweden \\
	E--mail: {\selectfont\ttfamily\itshape kmodin@maths.lth.se}
	}
	\affil[2]{
	SKF Engineering \& Research Centre \\
	MDC, RKs--2, SE--415 50 Göteborg, Sweden
	}
\date{March 25, 2009}
\begin{document}

\maketitle


\newcommand{\Pext}{\overline{\set{P}}}

\begin{abstract}
	Geometric integration of non-autonomous classical engineering problems,
	such as rotor dynamics, is investigated. It is shown, both numerically and by
	backward error analysis, that geometric (structure preserving) integration
	algorithms are superior to conventional Runge--Kutta methods.  
	\par\smallskip
	{\bf Key-words:} Geometric numerical integration, splitting methods, rotor dynamics
\end{abstract}


\section{Introduction} 
\label{sec:Introduction}

In this paper we study geometric numerical integration algorithms for 
non-autonomous systems. By classifying the appropriate Lie sub-algebra
of vector fields, the standard framework for backward error analysis
can be used to explain the superior qualitative behaviour of
geometric methods based on the splitting approach.

The current section continues with a brief review of 
the general framework for geometric methods, mainly following
the approach by Reich~\cite{Re1999}.
In Section~\ref{sec:linear_systems} we study geometric integration
of linear systems with non-constant periodic coefficients.
A numerical example from classical rotor dynamics is given.
%
Conclusions are given in Section~\ref{sec:conclusions}.

We adopt the following notation. $\P$ denotes a phase space manifold of dimension
$n$, with local coordinates $\vx=(x_1,\ldots,x_n)$. In the case when $\P$
is a linear space we also use $\set{P}$.
Further, $\Xcal(\P)$ denotes the
linear space of vector fields on~$\P$.
The flow of $X\in\Xcal(\P)$ is denoted $\flow^t_X$, where
$t$ is the time parameter.
The Lie derivative
along $X$ is denoted~$\LieD{X}$.
If~$X,Y\in\Xcal(\P)$ then the vector field commutator~$[X,Y]_\Xcal=\LieD{X}Y$
supplies~$\Xcal(\P)$ with an infinite dimensional Lie algebra structure.
Its corresponding Lie group is
the set~$\Diff(\P)$ of diffeomorphisms on~$\P$, with composition
as group operation.

\begin{remark}
	More precisely, the group $\Diff(\P)$ has the structure
	of a \emph{Fréchet Lie group}. See~\cite{Ha1982,Sc2004} for
	issues concerning infinite dimensional Lie groups.
\end{remark}


As usual, the general linear group of $n\times n$--matrices is denoted
$\GL(n)$ and its corresponding Lie algebra
$\gl(n)$. We use $[A,B]_\gl$ for the matrix commutator~$AB-BA$. 


If $\set{V}$ is a metric linear space, then the linear space of 
smooth periodic functions $\realset\to\set{V}$
with period $2\pi/\Omega$ is denoted $\C_\Omega(\set{V})$. Notice that
this space is closed under differentiation, i.e., if $f \in \C_\Omega(\set{V})$
then it also holds that $f' \in \C_\Omega(\set{V})$.

\subsection{Geometric Integration and Backward Error Analysis} 
\label{sub:geometric_integration_and_backward_error_analysis}

Let $\Xcal_S(\P)$ be a sub-algebra of $\Xcal(\P)$, i.e.,
a linear sub-space which is closed under the commutator.
Its corresponding sub-group of $\Diff(\P)$ is denoted
$\Diff_S(\P)$.
Let $X\in\Xcal_S(\P)$ be a vector field which is to be integrated
numerically. Assume that $X$ can be splitted as a sum
of explicitly integrable vector field also belonging to
$\Xcal_S(\P)$. That is, 
$X=Y+Z$ where $Y,Z \in\Xcal_S(\P)$ and
$\flow^t_{Y},\flow^t_{Z}$ can be computed explicitly.
By various compositions, various numerical integration
schemes for $\flow^t_X$ are obtained. 
The most classical example is $\dflow_h = \flow_Y^{h/2}\circ
\flow_Z^{h\vphantom{/2}}\circ\flow_Y^{h/2}$, which yields a second
order symmetric method ($h$~is the step-size parameter of the method).
Since $\flow^t_Y,\flow^t_Z\in \Diff_S(\P)$, and since $\Diff_S(\P)$
is closed under composition (since it is the group operation),
it holds that $\dflow_h \in \Diff_S(\P)$.
Thus, the splitting approach yields structure preserving methods,
which is a key property.

Backward error analysis for structure preserving integrators
deals with the question of finding a modified vector field $\tilde{X}\in\Xcal_S(\P)$
such that $\dflow_h = \flow_{\tilde{X}}^h$. In conjunction with perturbation
theory, such an analysis can be used to study the dynamical properties of~$\dflow_h$. 
For splitting methods, backward error analysis is particularly simple as
the modified vector field, at least formally, 
is obtained from the Baker--Campbell--Hausdorff (BCH) formula.
For details on this framework we refer to Reich~\cite{Re1999}.

\section{Linear Systems} 
\label{sec:linear_systems}

In this section we study non-autonomous systems on a linear phase space $\set{P}$
with global coordinates $\vx=(x_1,\ldots,x_n)$. More precisely, let
$\Ggroup$ be a Lie sub-group of $\GL(n)$ and $\Galg$ its corresponding
Lie sub-algebra. We consider systems of the form
\begin{equation} \label{eq:linear_systems}
	\dot{\vx} = A(t)\vx + f(t)
\end{equation}
where $A \in \C_\Omega(\Galg)$ and $f \in \C_\Omega(\set{P})$ is a smooth vector valued
periodic function with period~$T=2\pi/\Omega$. Our objective is to construct
geometric integrators for~\eqref{eq:linear_systems}. Of course,
since the system is linear, there is a closed form formula
for its solution. However, in engineering applications, e.g.\ finite element analysis, 
the system is typically very large so computing the exponential matrix, which is
necessary for the exact solution, is not computationally efficient.
Also, it might not be possible
to analytically integrate $f$~and~$A$ over~$t$,
which is necessary for the exact solution.

In order to study dynamical systems of the form~\eqref{eq:linear_systems}
in the framework of geometric integration, we need, first of all, to 
extend the phase space to $\Pext=\set{P}\times\realset$ to include the time
variable in the dynamics. Coordinates on $\Pext$ are now
given by $(\vx,t)$ and the new independent variable is denoted~$\tau$
(in practice we always have $t(\tau)=\tau$).
Further, we need to find a Lie sub-algebra of $\Xcal(\Pext)$
which captures the form~\eqref{eq:linear_systems}. For this purpose, consider the
set of vector field on $\Pext$ given by
\begin{equation} \label{eq:linear_sub_algebra}
	\mathfrak{L}_{\Omega}(\set{P},\Galg) = \big\{ 
		X \in \Xcal(\Pext) \; \big| \; 
		X(\vx,t) = (A(t) x + f(t),\alpha), \; A\in\C_{\Omega}(\Galg),\; f\in\C_\Omega(\set{P}),\;\alpha\in\realset
	\big\} \; .
\end{equation}

We now continue with some results concerning properties of~$\mathfrak{L}_{\Omega}(\set{P},\Galg)$.
The first result states that it actually \emph{is} a Lie sub-algebra.

\begin{proposition} \label{prop:linear_lie_algebra}
	The set of vector fields $\mathfrak{L}_{\Omega}(\set{P},\Galg)$ is a Lie sub-algebra
	of $\Xcal(\Pext)$.
\end{proposition}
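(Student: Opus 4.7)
The plan is to verify the two defining properties of a Lie sub-algebra: that $\mathfrak{L}_{\Omega}(\set{P},\Galg)$ is closed under linear combinations and under the vector field commutator $[\cdot,\cdot]_\Xcal$. Linearity is essentially immediate: for $X_i(\vx,t) = (A_i(t)\vx + f_i(t),\alpha_i)$, $i=1,2$, and scalars $c_1,c_2 \in \realset$, the combination $c_1 X_1 + c_2 X_2$ has matrix part $c_1 A_1 + c_2 A_2$, inhomogeneous part $c_1 f_1 + c_2 f_2$, and scalar part $c_1 \alpha_1 + c_2 \alpha_2$, all of which sit in the required spaces since $\Galg$, $\C_\Omega(\Galg)$ and $\C_\Omega(\set{P})$ are themselves linear.

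The core step is the bracket computation. Working in the global coordinates $(\vx,t)$ on $\Pext$, I would apply the standard coordinate formula $[X_1,X_2]^i_\Xcal = X_1^j \pd_j X_2^i - X_2^j \pd_j X_1^i$. Since the $t$-components $\alpha_1,\alpha_2$ are constants, the $t$-component of the bracket vanishes. For the $\vx$-part, three types of terms appear: the pointwise matrix commutator $[A_1(t),A_2(t)]_\gl \vx$ coming from the $x$-derivatives of the linear parts; cross terms $A_j(t) f_i(t)$ from differentiating the linear part against the affine part; and time-derivative contributions $\alpha_i \dot A_j(t) \vx$ and $\alpha_i \dot f_j(t)$ from the $\alpha_i \pd_t$ component of $X_i$ acting on the explicitly $t$-dependent coefficients of $X_j$. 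Collecting yields
\begin{equation*}
	[X_1,X_2]_\Xcal(\vx,t) = \bigl( A_3(t)\vx + f_3(t),\, 0 \bigr),
\end{equation*}
with $A_3 = [A_2,A_1]_\gl + \alpha_1 \dot A_2 - \alpha_2 \dot A_1$ and $f_3 = A_2 f_1 - A_1 f_2 + \alpha_1 \dot f_2 - \alpha_2 \dot f_1$.

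It then remains to verify $A_3 \in \C_\Omega(\Galg)$ and $f_3 \in \C_\Omega(\set{P})$. For this I invoke three facts: (i) $\Galg$ is closed under the matrix commutator, so $[A_2(t),A_1(t)]_\gl \in \Galg$ pointwise; (ii) $\C_\Omega(\set{V})$ is closed under differentiation (as noted in the remark at the end of Section~\ref{sec:Introduction}), applied with $\set{V}=\Galg$ and $\set{V}=\set{P}$, so $\dot A_i \in \C_\Omega(\Galg)$ and $\dot f_i \in \C_\Omega(\set{P})$; (iii) pointwise products of smooth $T$-periodic matrix- and vector-valued functions are again $T$-periodic and smooth, which handles the $A_j f_i$ terms. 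Combined with the linearity already noted, this places $A_3$ and $f_3$ in the required spaces, so the bracket lies in $\mathfrak{L}_{\Omega}(\set{P},\Galg)$.

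The main obstacle is really only bookkeeping: the temptation is to treat $X_1,X_2$ as time-dependent vector fields on $\set{P}$ and to quote the bracket on $\set{P}$ alone, which would drop the $\alpha_i \dot A_j$ and $\alpha_i \dot f_j$ contributions. Doing the computation honestly on the extended phase space $\Pext$ makes it transparent why closure of $\C_\Omega$ under differentiation is exactly the ingredient needed to keep the bracket inside $\mathfrak{L}_{\Omega}(\set{P},\Galg)$.
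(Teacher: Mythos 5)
Your proof is correct and follows essentially the same route as the paper's: direct verification of closure under linear combinations and under the commutator via the coordinate formula on the extended phase space $\Pext$, with the key ingredients being closure of $\Galg$ under the matrix commutator and closure of $\C_\Omega$ under differentiation. The only difference is an immaterial sign convention for the bracket (your $A_3$, $f_3$ are the negatives of the paper's expressions), which does not affect membership in the relevant linear spaces.
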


\begin{proof}
	We need to check that $\mathfrak{L}_{\Omega}(\set{P},\Galg)$ is closed under vector operations and under
	the Lie bracket. That is, $X,Y\in \mathfrak{L}_{\Omega}(\set{P},\Galg)$ should imply $a X + b Y \in \mathfrak{L}_{\Omega}(\set{P},\Galg)$ for
	$a,b\in\realset$ and $[X,Y]_\Xcal \in \mathfrak{L}_{\Omega}(\set{P},\Galg)$. 
	
	With $X(\vx,t)=(A(t)\vx+f(t),\alpha)$ and
	$Y(\vx,t)=(B(t)\vx+g(t),\beta)$ we get $(aX+bY)(\vx,t)=((a A+b B)(t)\vx+(a f+b g)(t),a \alpha+b \beta)$
	which is of the desired form. Further,
	\begin{multline*}
		[X,Y]_{\Xcal} = \begin{pmatrix} A(t) & A'(t)\vx+f'(t) \\ 0 & 0 \end{pmatrix} 
		\begin{pmatrix} B(t)\vx + g(t) \\ \beta
		\end{pmatrix}
		- 
		\begin{pmatrix} B(t) & B'(t)\vx+g'(t) \\ 0 & 0 \end{pmatrix} 
		\begin{pmatrix} A\vx + f(t) \\ \alpha
		\end{pmatrix}
		\\
		= \begin{pmatrix}
			(A(t)B(t)-B(t)A(t)+\beta A'(t)-\alpha B'(t))\vx + A(t)g(t) - B(t)f(t) + \beta f'(t) - \alpha g'(t) \\ 0
		\end{pmatrix}
	\end{multline*}
	which is of the desired form since $AB-BA+\beta A'-\alpha B'=
	[A,B]_\GL+\beta A' + \alpha B' \in \C_{\Omega}(\Galg)$ and 
	$(Ag - Bf + \beta f' - \alpha g') \in \C_\Omega(\set{P})$.
\end{proof}

From the proof above we immediately obtain the following corollary.

\begin{corollary}
	The set $\mathfrak{l}_\Omega=\C_{\Omega}(\Galg)\times\C_\Omega(\set{P})\times\realset$ equipped 
	with the induced vector operation
	\[
		a(A,f,\alpha)+b(B,g,\beta)=(aA+bB,af+bg,a\alpha+b\beta) , \quad a,b\in\realset
	\]
	and with the bracket operation
	\[
		[(A,f,\alpha),(B,g,\beta)]_{\mathfrak{L}} = ([A,B]_\GL+\beta A' + \alpha B',Ag-Bf+\beta f'-\alpha g',0)
	\]
	is a Lie algebra which is isomorphic to $\mathfrak{L}_{\Omega}(\set{P},\Galg)$ with isomorphism
	$\mathfrak{l}_\Omega \ni (A,f,\alpha) \mapsto (A\vx+f,\alpha) \in \mathfrak{L}_{\Omega}(\set{P},\Galg)$.
\end{corollary}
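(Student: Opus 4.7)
The plan is to transport the Lie algebra structure of $\mathfrak{L}_{\Omega}(\set{P},\Galg)$ established in Proposition~\ref{prop:linear_lie_algebra} across the map $\Psi \colon \mathfrak{l}_\Omega \to \mathfrak{L}_{\Omega}(\set{P},\Galg)$ defined by $(A,f,\alpha) \mapsto (A\vx+f,\alpha)$. Once $\Psi$ is shown to be a bijective linear map intertwining the two brackets, the Lie algebra axioms on $\mathfrak{l}_\Omega$ (bilinearity, antisymmetry, and the Jacobi identity) follow for free by pulling them back through $\Psi$, so there is no independent verification to perform.

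First I would verify that $\Psi$ is a bijection. Surjectivity is immediate from the definition~\eqref{eq:linear_sub_algebra}: every element of $\mathfrak{L}_{\Omega}(\set{P},\Galg)$ is by construction of the form $(A(t)\vx+f(t),\alpha)$ with the required regularity and so lies in the image. Injectivity is a uniqueness-of-decomposition argument: given such a vector field, $\alpha$ is read off as the second component, $f(t)$ is the first component evaluated at $\vx=0$, and $A(t)$ is then uniquely determined as the linear-in-$\vx$ part. Hence the triple $(A,f,\alpha)$ is uniquely recoverable.

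Linearity of $\Psi$ with respect to the stated vector operation on $\mathfrak{l}_\Omega$ reduces to the identity
\[
	\Psi\bigl(a(A,f,\alpha)+b(B,g,\beta)\bigr) = \bigl((aA+bB)\vx+(af+bg),\,a\alpha+b\beta\bigr),
\]
which coincides with componentwise addition of vector fields on $\mathfrak{L}_{\Omega}(\set{P},\Galg)$. Bracket compatibility is exactly what the displayed commutator calculation in the proof of Proposition~\ref{prop:linear_lie_algebra} provides: its output is a vector field of the form $\Psi(\cdot)$ whose three components match, term for term, the definition of $[(A,f,\alpha),(B,g,\beta)]_{\mathfrak{L}}$, up to the routine bookkeeping needed to align signs with the formula stated in the corollary. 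With $\Psi$ thereby established as a bracket-compatible linear bijection onto a known Lie algebra, $\mathfrak{l}_\Omega$ inherits the Lie algebra structure and $\Psi$ is the claimed isomorphism.

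I do not anticipate a genuine obstacle: the substantive content, namely closure under the bracket, has already been worked out in the proof of Proposition~\ref{prop:linear_lie_algebra}. The only step requiring any care is the uniqueness argument for injectivity, and even that is elementary since the splitting of an affine map into linear and constant parts is canonical.
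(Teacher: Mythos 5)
Your proposal is correct and follows essentially the same route as the paper, which simply asserts that the corollary is ``immediately obtained'' from the commutator computation in the proof of Proposition~\ref{prop:linear_lie_algebra}; you merely make explicit the routine transport-of-structure steps (bijectivity, linearity, bracket compatibility) that the paper leaves implicit. Your remark about ``aligning signs'' is apt: the displayed computation yields $\beta A'-\alpha B'$ and $\beta f'-\alpha g'$, whereas the corollary's first component reads $\beta A'+\alpha B'$, an inconsistency in the paper's own statement rather than a gap in your argument.
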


Since $\C_\Omega(\set{P})$ and $\C_{\Omega}(\Galg)$ are infinite dimensional 
it follows that $\mathfrak{l}_\Omega$, and therefore
also $\mathfrak{L}_{\Omega}(\set{P},\Galg)$, is infinite dimensional. However, a finite dimensional sub-space of 
$\C_\Omega(\set{P})$ is given by
\begin{equation} \label{eq:finite_frequency}
	\C_{\Omega,k}(\set{P}) = \big\{ f \in \C_\Omega(\set{P}) \;\big| \; 
	f(t) = \vect{a}_0 + \sum_{i=1}^k \vect{a}_i \cos(i \Omega t) + 
	\vect{b}_i \sin( i \Omega t), \; \vect{a}_i,\vect{b}_i\in\set{P} \big\}
\end{equation}
which is the sub-space of $\C_\Omega(\set{P})$ with angular frequencies bounded by~$k\Omega$.
Notice that the dimension of $\C_{\Omega,k}(\set{P})$ is $(2k+1)n$ and that 
$\C_{\Omega,\infty}(\set{P})=\C_{\Omega}(\set{P})$ and $\C_{\Omega,0}(\set{P})=\set{P}$. 
Further, $\C_{\Omega,k}(\set{P})$
is closed under differentiation. Clearly, these results also holds for
the corresponding sub-space $\C_{\Omega,l}(\Galg)$ of $\C_{\Omega}(\Galg)$,
except that the dimension is given by $(2l+1)\dim{\Galg}$ instead.

By replacing $\C_{\Omega}(\set{P})$ 
with $\C_{\Omega,k}(\set{P})$ and $\C_{\Omega}(\Galg)$ with $\C_{\Omega,l}(\Galg)$ 
we get the sub-spaces $\mathfrak{l}_{\Omega,k,l} = \C_{\Omega,l}(\Galg)\times\C_{\Omega,m}(\set{P})\times\realset$
of~$\mathfrak{l}_\Omega$.
In general $\mathfrak{l}_{\Omega,k,l}$ is \emph{not} a sub-algebra, due to the fact that
$A,B\in\C_{\Omega,l}(\Galg)$ does not in general imply $AB\in\C_{\Omega,l}(\Galg)$. However,
in some special cases the implication holds true.

\begin{proposition} \label{prop:finite_lie_algebra}
	The sub-spaces $\mathfrak{l}_{\Omega,k,0} = \Galg\times\C_{\Omega,k}(\set{P})\times\realset$
	and $\mathfrak{l}_{\Omega,k,\infty} = \C_{\Omega}(\Galg)\times\C_{\Omega,k}(\set{P})\times\realset$ are
	Lie sub-algebras of~$\mathfrak{l}_\Omega$. Further, $\mathfrak{l}_{\Omega,k,0}$ is finite dimensional
	with dimension $\dim\Galg + (2k+1)n + 1$.
\end{proposition}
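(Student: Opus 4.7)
The plan is to verify, for each of the two stated sub-spaces, the two closure properties---linear combinations and the bracket $[\cdot,\cdot]_{\mathfrak{L}}$ supplied by the preceding corollary---and then to read off the dimension of $\mathfrak{l}_{\Omega,k,0}$ directly from its definition as a Cartesian product. Closure under addition and scalar multiplication is essentially immediate in both cases, because each Cartesian factor is itself a linear subspace of $\C_{\Omega}(\Galg)$, $\C_{\Omega}(\set{P})$, or $\realset$; the substance therefore lies in checking the bracket.

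For $\mathfrak{l}_{\Omega,k,0}$ I would take $(A,f,\alpha),(B,g,\beta)\in\mathfrak{l}_{\Omega,k,0}$ with $A,B\in\Galg$ constant. Then $A'=B'=0$, so the bracket simplifies to $\bigl([A,B]_{\gl},\,Ag-Bf+\beta f'-\alpha g',\,0\bigr)$. The matrix slot lies in $\Galg$ because $\Galg$ is a Lie sub-algebra of $\gl(n)$. For the $\set{P}$-slot I would invoke two elementary properties of the finite Fourier basis $\{1,\cos(i\Omega t),\sin(i\Omega t)\}_{i=1}^{k}$ underlying $\C_{\Omega,k}(\set{P})$: multiplication by a constant matrix acts coefficient-wise and therefore preserves the frequency bound $k\Omega$, and $t$-differentiation permutes the basis up to a sign and a factor of $i\Omega$. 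Hence $Ag-Bf+\beta f'-\alpha g'\in\C_{\Omega,k}(\set{P})$, closing the case.

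For $\mathfrak{l}_{\Omega,k,\infty}$ the matrix slot $[A,B]_{\gl}+\beta A'+\alpha B'$ lies in $\C_{\Omega}(\Galg)$ by the same pointwise-in-$t$ reasoning used in the proof of Proposition~\ref{prop:linear_lie_algebra}, together with the observation that the derivative of a smooth periodic curve into the linear subspace $\Galg\subset\gl(n)$ remains $\Galg$-valued. The $\set{P}$-slot is where I expect the main obstacle: since $A\in\C_{\Omega}(\Galg)$ carries no \emph{a priori} frequency bound, a product $A(t)g(t)$ is not automatically in $\C_{\Omega,k}(\set{P})$. I would attack this by expanding $A$ and $g$ in their Fourier series and looking for the structural reason---possibly the fact that one naturally uses only those $A$ whose action still respects the chosen frequency band, or a reinterpretation of $\mathfrak{l}_{\Omega,k,\infty}$ as the limiting case $\mathfrak{l}_{\Omega}$ already handled in Proposition~\ref{prop:linear_lie_algebra}---that confines the spectrum of $Ag-Bf$ to $\{0,\Omega,\ldots,k\Omega\}$.

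Finally, for the dimension count, $\mathfrak{l}_{\Omega,k,0}=\Galg\times\C_{\Omega,k}(\set{P})\times\realset$ is a direct sum of three finite-dimensional pieces, so its dimension is simply $\dim\Galg+(2k+1)n+1$, as stated.
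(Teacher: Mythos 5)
Your treatment of $\mathfrak{l}_{\Omega,k,0}$ and of the dimension count is correct and complete: with $A,B\in\Galg$ constant the derivative terms in the bracket vanish, $[A,B]_\gl\in\Galg$ because $\Galg$ is a matrix Lie sub-algebra, and both multiplication by a constant matrix and $t$-differentiation preserve the span of $\{1,\cos(i\Omega t),\sin(i\Omega t)\}_{1\le i\le k}$, so the $\set{P}$-slot stays in $\C_{\Omega,k}(\set{P})$; the dimension then falls out of the Cartesian product. (The paper states this proposition without proof, so there is no argument of record to compare against.)

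The genuine gap is exactly the step you flagged and deferred, and it cannot be closed: for finite $k$ the set $\mathfrak{l}_{\Omega,k,\infty}$ is \emph{not} closed under the bracket. Concretely, pick $M\in\Galg$ with $M\neq 0$, set $A(t)=\cos\bigl((k+1)\Omega t\bigr)M\in\C_{\Omega}(\Galg)$, $B=0$, $f=0$, $\alpha=\beta=0$, and let $g\in\C_{\Omega,k}(\set{P})$ be a constant vector $\vect{a}$ with $M\vect{a}\neq 0$. The $\set{P}$-component of $[(A,0,0),(0,g,0)]_{\mathfrak{L}}$ is $A(t)g=\cos\bigl((k+1)\Omega t\bigr)M\vect{a}$, whose angular frequency $(k+1)\Omega$ exceeds the bound $k\Omega$, so it lies outside $\C_{\Omega,k}(\set{P})$. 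No Fourier-expansion argument of the kind you propose can rescue this: the product $Ag$ convolves the spectra of $A$ and $g$, and spillage beyond frequency $k\Omega$ is only prevented when $A$ is constant (the case $l=0$ you already handled) or when no bound is imposed on the $\set{P}$-slot at all (the case $k=\infty$, where one recovers $\mathfrak{l}_{\Omega}$ itself, i.e.\ Proposition~\ref{prop:linear_lie_algebra}). So your instinct about where the obstacle lies is right, but the honest conclusion is that the step fails and the second claim of the proposition holds only in those degenerate cases, not that a "structural reason" remains to be found.
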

Clearly, $\mathfrak{l}_{\Omega,k,0}$ and $\mathfrak{l}_{\Omega,k,\infty}$ induces corresponding Lie 
sub-algebras~$\mathfrak{L}_{\Omega,k,0}(\set{P},\Galg)$ and~$\mathfrak{L}_{\Omega,k,\infty}(\set{P},\Galg)$ of~$\mathfrak{L}_{\Omega}(\set{P},\Galg)$.

\subsection{Geometric Integration} 
\label{sub:geometric_integration}

In this section we describe an approach for geometric integration of
of systems of the form~\eqref{eq:linear_systems}. The approach is based on splitting.
To this extent we write~\eqref{eq:linear_systems} as an extended system
\begin{equation} \label{eq:linear_system_extended}
	\frac{\ud}{\ud\tau} \begin{pmatrix} \vx \\ t \end{pmatrix} =
	\begin{pmatrix} A(t)\vx + f(t) \\ 1 \end{pmatrix} \equiv X(\vx,t) \; .
\end{equation}
It is clear that $X\in\mathfrak{L}_{\Omega}(\set{P},\Galg)$. Since $\mathfrak{L}_{\Omega}(\set{P},\Galg)$ is a Lie sub-algebra
of $\Xcal(\Pext)$ it corresponds to a Lie sub-group
$\Diff_\Omega$ of $\Diff(\Pext)$. Geometric integration 
of~\eqref{eq:linear_system_extended} now means to find a one-step integration
algorithm~$\Phi_h \in \Diff_\Omega$.
There are of several ways to obtain such integrators.
One of the simplest, but yet most powerful ways, is to use a splitting
approach. That is, to split the vector field~$X$ as a sum of
two vector fields each of them of the form~\eqref{eq:linear_system_extended}.


%


\newpage

\subsection{Example: Linear Rotor Dynamics} 
\label{sub:example_linear_rotor_dynamics}

%
%
\begin{wrapfigure}[12]{r}[0ex]{0.25\textwidth} 
	\centering
	\raisebox{-29ex}[0ex][0ex]{\includegraphics[width=0.17\textwidth]{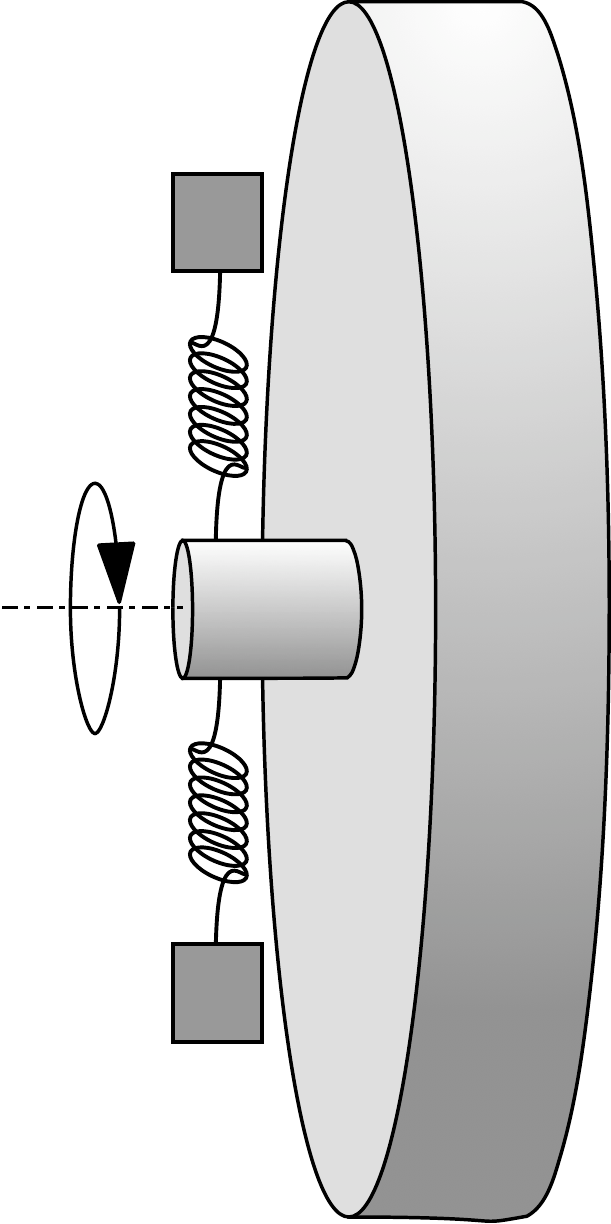}} 
\end{wrapfigure}

This example is the simplest possible rotor dynamical problem. It consists of
a disc attached to a shaft which is rotating with constant angular velocity~$\Omega$.
The shaft is held by a bearing, which is modelled as a linear spring with
stiffness~$k$. (See figure.)
The disc is slightly unbalanced, i.e., its centre of mass does not align with rotational
axis. This implies a time-dependent periodic centrifugal force acting on the rotor. 

The phase space for this system is given by $\set{P}=\realset^4$,
with coordinates~$\vx=(q_1,q_2,p_1,p_2)$, which is the horizontal and vertical position
of the shaft in a plane perpendicular to the axis of rotation, and their corresponding momenta.
The equations of motion are of the form~\eqref{eq:linear_system_extended} with
\begin{equation*} \label{eq:gov_simple_rotor}
	A = \begin{pmatrix}
		 0& 0 & m^{-1}& 0\\
		 0&0  & 0& m^{-1} \\
		 -k &0 & 0& 0\\
	0	& -k &0& 0
	\end{pmatrix}
	\qquad \text{and} \qquad
	f(t) = \varepsilon \Omega^2 \begin{pmatrix}
		0 \\ 0 \\ -\cos(\Omega t) \\ \sin(\Omega t)
	\end{pmatrix}
\end{equation*}
where $m$ is the total mass and $\varepsilon$ is the magnitude of the unbalance.

It holds that $A^T J + J A = 0$, so $A$ is an element in the canonical symplectic
Lie sub-algebra of~$\gl(4)$, i.e., we have~$\Galg=\spg(4)$.
Further, since $A$ is independent of $t$, and $f$ only contains a single
frequency, the appropriate Lie sub-algebra of~$\Xcal(\realset^4)$ is 
$\mathfrak{L}_{\Omega,1,0}(\realset^4,\spg(4))$, which is finite dimensional.

The eigenvalues of $A$ are $\pm \i \sqrt{k/m}$. Thus if $\Omega$ is close to
a multiple of the eigen frequency $\omega=\sqrt{k/m}$ of the system
starts to resonate. In this example we investigate how well various numerical
integrators capture that behaviour, both qualitatively and quantitatively.

For the data given in Table~\ref{tab:data} the problem is
numerically integrated with four different methods, two which 
are geometric and two which are not.

\begin{center}
	\begin{tabular}{lc}
	\textbf{Method} & \textbf{Geometric?} \\
	\hline\hline
	Implicit midpoint &  Yes \\
	Splitting method & Yes \\
	Heun's method & No \\
	Implicit extrapolation method & No
	\end{tabular}	
\end{center}

\begin{table} \label{tab:data}
	\centering
	\begin{tabular}{l|rl}
	 & \textbf{Value} & \textbf{Unit} \\
	\hline\hline
	$m$ & 1 & \unit{kg} \\
	$k$ & 1 & \unit{N/m} \\
	$\Omega$ & 1.02 & \unit{rad/s} \\
	$\varepsilon$ & 0.1 & $\unit{m \cdot kg}$ \\
	$\vx_0$ & (0,0,0,0) & \unit{(m,m,m/s,m/s)} 
	\end{tabular}
	\caption{Data used in the simulations of the rotor dynamical problem.}
\end{table}

The results of the $x_1$--variable are shown in Figure~\ref{fig:SimpleRotor_xplot}.
Notice that the geometric integrators captures the resonance phenomena in a qualitatively
correct way, whereas the non-geometric methods does not show the correct behaviour.

\begin{figure}
	\centering
		\includegraphics[width=0.99\textwidth]{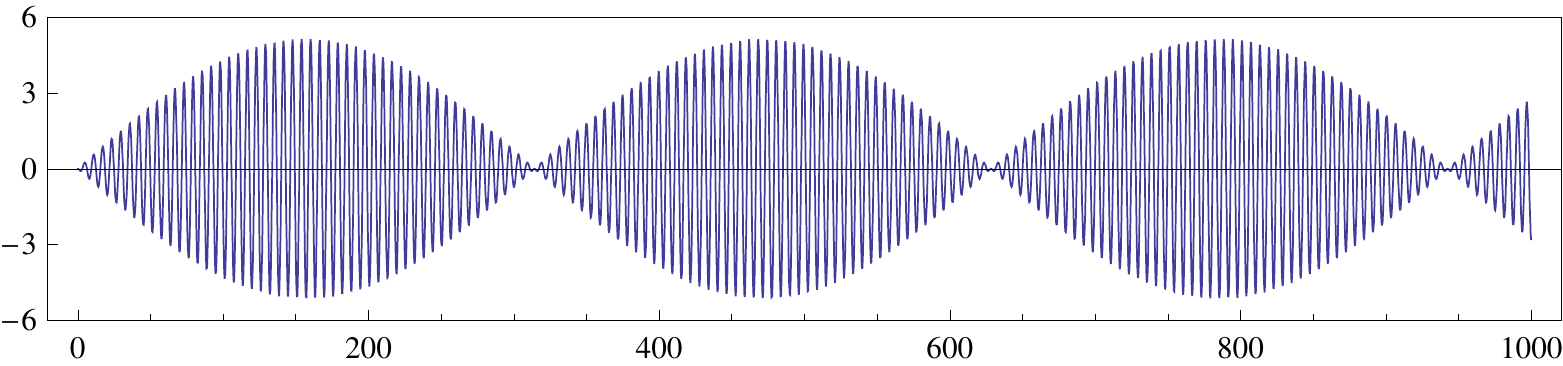} \\
		\includegraphics[width=0.99\textwidth]{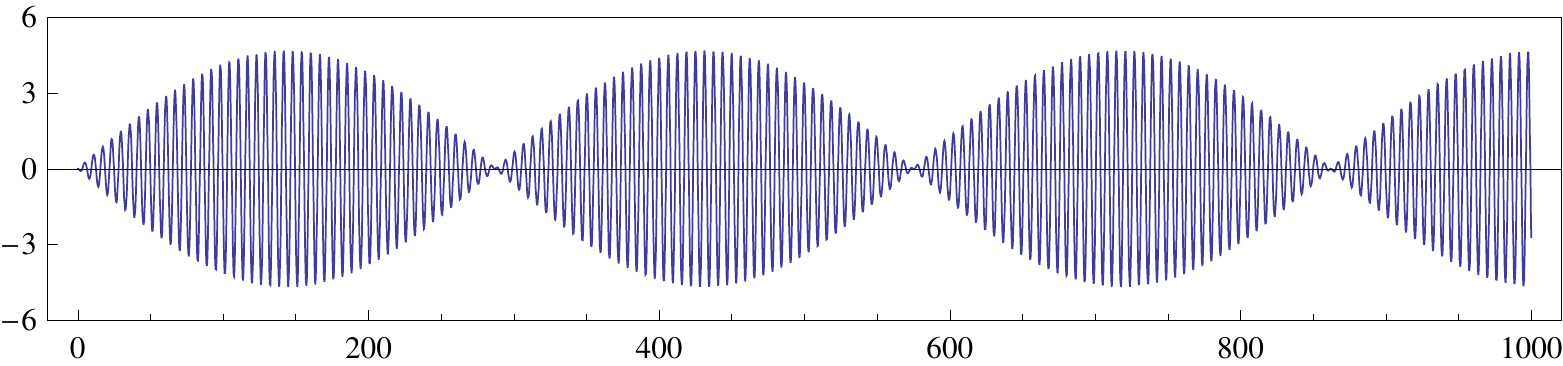} \\
		\includegraphics[width=0.99\textwidth]{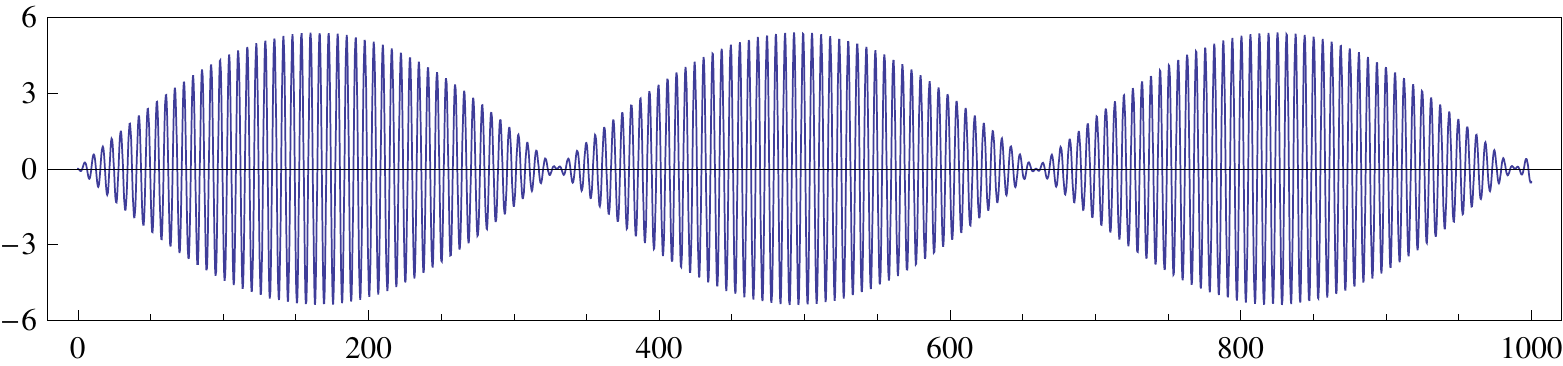} \\
		\includegraphics[width=0.99\textwidth]{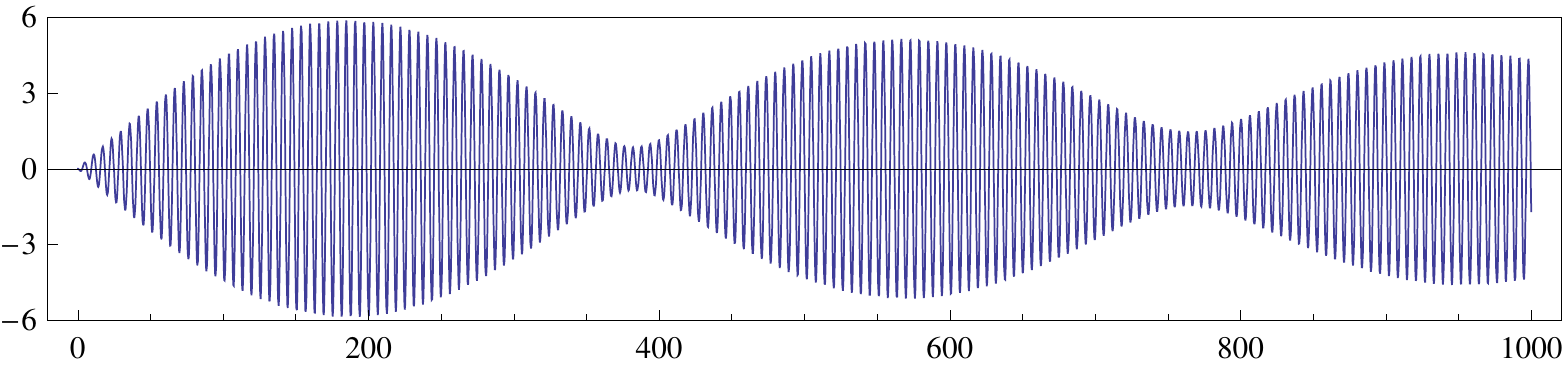} \\
		\includegraphics[width=0.99\textwidth]{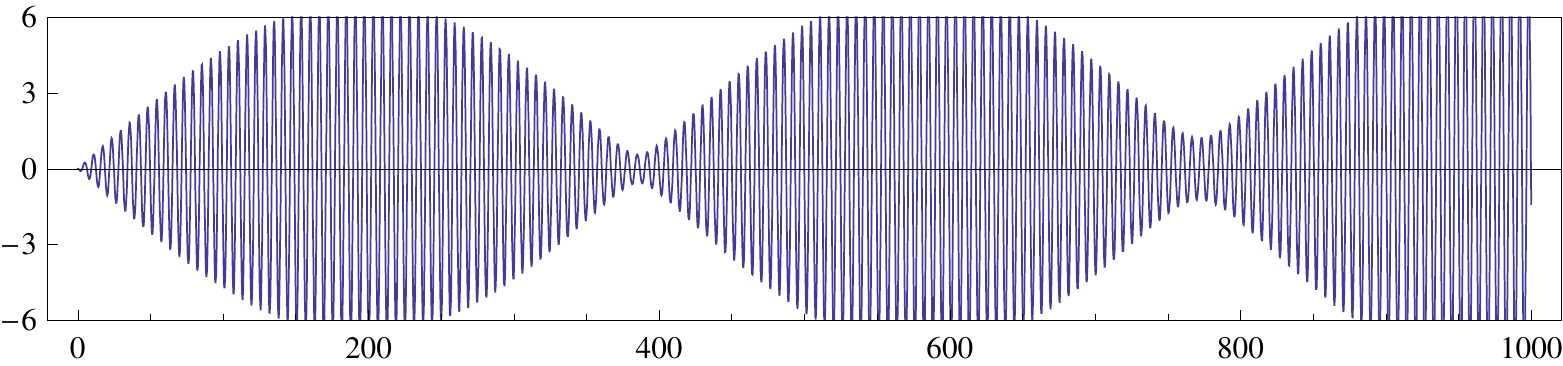} \\
		Time $t$
	\caption{Results for $q_1$ variable for the simple linear rotor example. 
	\emph{From top:} (1) exact solution, (2) implicit midpoint, geometric, 
	(3) Störmer--Verlet, geometric, (4) implicit Runge--Kutta, non-geometric,
	(5) explicit Runge-Kutta, non-geometric. All methods are second order accurate. 
	Notice the superior qualitative and
	quantitative behaviour of the geometric methods.}
	\label{fig:SimpleRotor_xplot}
\end{figure}

%

%



%


\section{Conclusions} 
\label{sec:conclusions}

A structural analysis of non-autonomous systems has been carried out using the framework
of Lie sub-algebras of the Lie algebra of vector fields. As a direct application,
backward error analysis results are obtained for this class of problems.
Numerical examples of a classical rotor dynamical problem show that the geometric
methods preserving the structure of the problem indeed are favourable over
non-geometric dito. 




\paragraph{Acknowledgement} 
\label{par:acknowledgement}
	\onSKF{
		The author is grateful to Dag Fritzson, Claus Führer and Gustaf Söderlind
		for helpful discussions.
		The author would also like to thank SKF for the support given.	
	}{
		The authors are grateful to Claus Führer and Gustaf Söderlind
		for fruitful discussions.
		The work is supported by SKF and by the Swedish Research Council
		under contract \textsf{VR-621-2006-5737}.
	}


\bibliographystyle{abbrv} 
\bibliography{../../Papers/References}

\def\cprime{$'$}
\begin{thebibliography}{1}

\bibitem{Ha1982}
R.~S. Hamilton.
\newblock The inverse function theorem of {N}ash and {M}oser.
\newblock {\em Bull. Amer. Math. Soc. (N.S.)}, 7(1):65--222, 1982.

\bibitem{Re1999}
S.~Reich.
\newblock Backward error analysis for numerical integrators.
\newblock {\em SIAM J. Numer. Anal.}, 36(5):1549--1570 (electronic), 1999.

\bibitem{Sc2004}
R.~Schmid.
\newblock Infinite dimensional {L}ie groups with applications to mathematical
  physics.
\newblock {\em J. Geom. Symmetry Phys.}, 1:54--120, 2004.

\end{thebibliography}


\end{document}